\documentclass[12pt,a4paper]{amsart}
\usepackage{latexsym,amsfonts,amsthm,amsmath,mathrsfs,amssymb}
\usepackage[british]{babel}
\usepackage[dvips]{color}
\usepackage[utf8]{inputenc}
\usepackage[T1]{fontenc}
\usepackage[dvips,final]{graphics}
\usepackage{xcolor}
\usepackage{mathrsfs}
\usepackage{mathtools} 
\usepackage{breqn}
\pagestyle{plain}
\usepackage{epstopdf}
\usepackage{verbatim}
\usepackage{amscd}
\usepackage{hyperref}
\usepackage[english,capitalize]{cleveref}
\usepackage{aliascnt}

\usepackage{todonotes}
%\setuptodonotes{disable}

%\usepackage{showkeys}
%\usepackage{pstricks, pst-node, pst-text, pst-3d}
%\usepackage{refcheck}

\textwidth = 6.5 in
\textheight = 9 in
\oddsidemargin = 0.0 in
\evensidemargin = 0.0 in
\topmargin = 0.0 in
\headheight = 0.0 in
\headsep = 0.0 in

\makeatletter
\def\newaliasedtheorem#1[#2]#3{
	\newaliascnt{#1@alt}{#2}
	\newtheorem{#1}[#1@alt]{#3}
	\expandafter\newcommand\csname #1@altname\endcsname{#3}
}
\makeatother

\theoremstyle{plain}

\newtheorem{theorem}{Theorem}[section]
\newaliasedtheorem{prop}[theorem]{Proposition}
\newaliasedtheorem{lem}[theorem]{Lemma}
\newaliasedtheorem{coroll}[theorem]{Corollary}

\theoremstyle{definition}
\newaliasedtheorem{defi}[theorem]{Definition}
\newaliasedtheorem{quest}[theorem]{Question}
\newaliasedtheorem{fact}[theorem]{Fact}

\theoremstyle{remark}
\newaliasedtheorem{rem}[theorem]{Remark}
\newaliasedtheorem{exa}[theorem]{Example}

\newcommand{\R}{\mathbb{R}}

\newcommand{\C}{\mathbb{C}}

\newcommand{\HH}{\mathbb{H}}

\newcommand{\G}{\mathbb{G}}

\let\altphi\phi
\let\phi\varphi
\let\varphi\altphi
\let\altphi\undefined

\newcommand{\average}{{\mathchoice {\kern1ex\vcenter{\hrule height.4pt
width 6pt
depth0pt} \kern-9.7pt} {\kern1ex\vcenter{\hrule height.4pt width 4.3pt
depth0pt}
\kern-7pt} {} {} }}

\address{\textsc{Daniela Di Donato}: 
Dipartimento di Ingegneria Industriale e Scienze Matematiche, Via Brecce Bianche, 12 60131 Ancona, Universit\'a Politecnica delle Marche.}
\email{d.didonato@staff.univpm.it}

\title{Intrinsic Lipschitz sections of no-linear quotient maps}

\date{\today}

\author{ Daniela Di Donato}

\begin{document}

\begin{abstract}
		 Le Donne and the author introduced the so-called intrinsically Lipschitz sections of a fixed quotient map $\pi$ in the context of metric spaces. Moreover, the author introduced the concept of intrinsic Cheeger energy when the quotient map is also linear. In this note we investigate about the no-linearity of $\pi$. In particular, we find a Leibniz  formula for the intrinsic slope when $\pi$ satisfies a weaker condition. After that, we focus our attention on Carnot groups and using the properties of intrinsic dilations we show that the dilation of a Lipschitz section is so too. Finally, in Carnot groups of step 2, we give a suitable additional condition in order to get the sum of two intrinsically Lipschitz sections is so too. 
\end{abstract}

\maketitle 
\tableofcontents
%\newpage

\section{Introduction}
Find out a good notion of rectifiability in subRiemannian Carnot groups \cite{ABB, BLU, CDPT} is unanswered questions which have continued to perplex many mathematicians for decades. Starting from a negative result by Ambrosio and Kirchheim \cite{AmbrosioKirchheimRect}, Franchi, Serapioni and Serra Cassano introduced the notion of intrinsically Lipschitz maps in  \cite{FSSC, FSSC03, MR2032504}  (see also  \cite{SC16, FS16}) in order to answer this question. Recently, Le Donne and the author  \cite{DDLD21} generalize this concept in metric spaces and prove some relevant properties like Ahlfors-David regularity, Ascoli-Arzel\'a Theorem, Extension theorem, etc. for the so-called intrinsically Lipschitz sections.

Our setting is the following. We have a metric space $X$, a topological space $Y$, and a 
quotient map $\pi:X\to Y$, meaning
continuous, open, and surjective.
%In some situations we might assume that $Y$ is metrizable so that $\pi$ becomes a Lipschitz quotient,  e.g. a submetry. normal
The standard example for us is when $X$ is a metric Lie group $G$ (meaning that the Lie group $G$ is equipped with a left-invariant distance that induces the manifold topology), for example a subRiemannian Carnot group, %$N\lhd G$ 
and $Y$ if the space of left cosets $G/H$, where 
$H<G$ is a  closed subgroup and $\pi:G\to G/H$ is the projection modulo $H$, $g\mapsto gH$.
\begin{defi}\label{def_ILS} 
We say that a map $\phi :Y \to X$ is a section of $\pi $ if 
\begin{equation}\label{equation1}
\pi \circ \phi =\mbox{id}_Y.
\end{equation}
Moreover, we say that a map $\phi:Y\to X$ is an {\em intrinsically Lipschitz section of $\pi$ with constant $L$},  with $L\in[1,\infty)$, if in addition
\begin{equation}\label{equationFINITA}
d(\phi (y_1), \phi (y_2)) \leq L d(\phi (y_1), \pi ^{-1} (y_2)), \quad \mbox{for all } y_1, y_2 \in Y.
\end{equation}
Here $d$ denotes the distance on $X$, and, as usual, for a subset $A\subset X$ and a point $x\in X$, we have
$d(x,A):=\inf\{d(x,a):a\in A\}$.
\end{defi}
A first observation is that this class is contained in the class of continuous maps (see \cite[Section 2.4]{DDLD21}) and in the case  $ \pi$ is a Lipschitz quotient or submetry \cite{MR1736929, Berestovski},  being intrinsically Lipschitz  is equivalent to biLipschitz embedding, see Proposition 2.4 in \cite{DDLD21}. Moreover, since $\phi$ is injective by \eqref{equation1}, the class of Lipschitz sections not include the constant maps. 

Conversely to Lipschitz classical notion, the intrinsically Lipschitz sections cannot be uniformly continuous; an easy example is as follows.
 \begin{exa}[$\pi$ is not linear]\label{exa3}
Let $X=(0,1), Y=(1,+\infty)$ and  $\pi : (0,1) \to (1,+\infty)$ defined as $\pi (x)=\frac 1 x,$ for every $x\in (0,1).$ It is easy to see that $\pi$ is a quotient map and  $\phi : (1,+\infty) \to (0,1)$  defined as $$ \phi (y)= \frac 1 {y},\quad \forall y\in (1,+\infty) ,$$ is an intrinsic $1$-Lipschitz  section of $\pi$. On the other hand, it is well-known that $\phi$ is not uniformly continuous.
  \end{exa}

 The long-term objective is to obtain \textbf{a Rademacher type theorem \'a la Cheeger } \cite{C99} (see also  \cite{KM16, K04}) for the intrinsically Lipschitz sections.
 
  This project fits to an active line of research \cite{Pansu, Vittone20, FMS14} on Rademacher type theorems, but it also studies connections to different notions and mathematical areas like Cheeger energy, Sobolev spaces, Optimal transport theory. The reader can see \cite{AGS08.2, AGS08.3, S06, MR2480619, MR2459454, H96, S00, HKST15}.

A first step of this research is given in  \cite[Proposition 3.1-3.2]{D22.1} and \cite{D22.31may} as follows.
 \begin{prop}
Let $\pi: X\to Y$ be a linear and quotient map between a normed space $X$ and a topological space $Y.$ 
\begin{enumerate}
\item the set of all sections is a vector space over $\R$ or $\C.$
\item If $\phi: Y \to X$ is an intrinsically Lipschitz section of $\pi,$ then for any $\lambda \in \R-\{0\}$ the section $\lambda \phi$ is also intrinsic Lipschitz for $1/\lambda \pi$ with the same Lipschitz constant.
\item If $X=\R^s$, $Y$ is a metric space and each fiber is contained in a straight line and \eqref{equationpinonL} holds, then the sum of two intrinsically Lipschitz sections is so too. 
\end{enumerate}
\end{prop}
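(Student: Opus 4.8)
The plan is to reduce the statement to a one–dimensional, fibre–by–fibre estimate and then to recombine the two hypotheses by a Pythagorean argument. Let $\phi_1,\phi_2\colon Y\to\R^s$ be the two intrinsically Lipschitz sections, with constants $L_1,L_2\in[1,\infty)$, and let $\phi:=\phi_1+\phi_2$ denote their sum. I would use \eqref{equationpinonL} in two ways. First, it is exactly the hypothesis that replaces linearity and guarantees that $\phi$ is again a section, i.e.\ that $\pi\circ\phi=\mathrm{id}_Y$, so that $\phi(y)$ still lands in the fibre $\pi^{-1}(y)$ rather than in a neighbouring one. Second, combined with the assumption that each fibre is contained in a straight line, it forces these fibre–lines to share a common direction $v\in\R^s$ with $\norm{v}=1$; this parallelism is what the whole estimate rests upon.

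Next I would split each fibre value into a transverse and a longitudinal part along $v$. Fix $y_1,y_2\in Y$. Since $\phi_1(y_1)$, $\phi_2(y_1)$ and $\phi(y_1)$ all lie on the single line $\pi^{-1}(y_1)$, they have the same orthogonal component with respect to $v$. The key geometric observation is that, for a point lying on a line of direction $v$, the distance to a parallel line of direction $v$ equals the norm of the transverse displacement and is independent of the position along $v$. This yields the main lemma of the proof:
\[
d(\phi_1(y_1),\pi^{-1}(y_2))=d(\phi_2(y_1),\pi^{-1}(y_2))=d(\phi(y_1),\pi^{-1}(y_2))=:\delta(y_1,y_2),
\]
so that the right–hand side of \eqref{equationFINITA} is one and the same quantity $\delta(y_1,y_2)$ for all three sections.

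With this in hand the estimate is short. Writing $\psi_i(y):=\langle\phi_i(y),v\rangle$ for the longitudinal coordinate, orthogonality of the transverse displacement to $v$ gives the Pythagorean identity $d(\phi_i(y_1),\phi_i(y_2))=\bigl(\delta(y_1,y_2)^2+|\psi_i(y_1)-\psi_i(y_2)|^2\bigr)^{1/2}$, and the intrinsic Lipschitz bound \eqref{equationFINITA} for $\phi_i$ rearranges to $|\psi_i(y_1)-\psi_i(y_2)|\le\delta(y_1,y_2)\sqrt{L_i^2-1}$. As the longitudinal coordinate of $\phi$ is $\psi_1+\psi_2$, the triangle inequality gives $|\psi_1(y_1)+\psi_2(y_1)-\psi_1(y_2)-\psi_2(y_2)|\le\delta(y_1,y_2)\bigl(\sqrt{L_1^2-1}+\sqrt{L_2^2-1}\bigr)$. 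Feeding this back into the Pythagorean identity for $\phi$ and using $d(\phi(y_1),\pi^{-1}(y_2))=\delta(y_1,y_2)$ yields
\[
d(\phi(y_1),\phi(y_2))\le\sqrt{1+\bigl(\sqrt{L_1^2-1}+\sqrt{L_2^2-1}\bigr)^2}\;d(\phi(y_1),\pi^{-1}(y_2)),
\]
which is \eqref{equationFINITA} for $\phi$ with the explicit constant $L=\sqrt{1+(\sqrt{L_1^2-1}+\sqrt{L_2^2-1})^2}$.

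The genuine obstacle is the main lemma, namely that the distance to $\pi^{-1}(y_2)$ is common to all three sections. This is precisely where both hypotheses enter: ``each fibre is contained in a straight line'' turns $d(\cdot,\pi^{-1}(y_2))$ into a distance to a line, while \eqref{equationpinonL} is what makes these lines parallel, so that the distance is insensitive to the longitudinal position. If \eqref{equationpinonL} only produced fibre–lines of varying direction, the transverse distances of $\phi_1(y_1)$ and $\phi_2(y_1)$ would differ and the clean decoupling above would collapse; indeed for lines through a common point (e.g.\ all through the origin) the sum need not even remain intrinsically Lipschitz. I would therefore verify carefully that \eqref{equationpinonL} delivers a single direction $v$ — equivalently, that the nearest–point projections onto $\pi^{-1}(y_2)$ agree for $\phi_1(y_1)$, $\phi_2(y_1)$ and $\phi(y_1)$ — and that it simultaneously renders $\phi$ a section, since everything downstream hinges on these two points.
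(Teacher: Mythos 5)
There is a genuine gap, and it sits exactly where you place the ``first use'' of \eqref{equationpinonL}: the sum $\phi=\phi_1+\phi_2$ is \emph{not} a section of $\pi$. By linearity of $\pi$ (or by \eqref{equationpinonL} with $\alpha=\beta=1$) one has $\pi(\phi_1(y)+\phi_2(y))=\pi(\phi_1(y))+\pi(\phi_2(y))=2y$, so $\pi\circ\phi=2\,\mathrm{id}_Y\neq\mathrm{id}_Y$. The correct reading --- and the one the paper takes via Proposition \ref{prop11giu.0} and the remark in the proof of Theorem \ref{theorem11.3} that ``the fiber of the sum is $(1/2\pi)^{-1}$'' --- is that $\phi$ is a section of $\tfrac12\pi$, whose fibre over $y$ is $\pi^{-1}(2y)$. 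This falsifies your main lemma: writing $\pi^{-1}(y)=\{(w(y),t):t\in\R\}$ in coordinates $\R^s=v^{\perp}\oplus\R v$, the point $\phi(y_1)$ has transverse coordinate $2w(y_1)$, not $w(y_1)$, so it does not lie on the line $\pi^{-1}(y_1)$, and $d(\phi(y_1),\pi^{-1}(y_2))=\|2w(y_1)-w(y_2)\|$ is in general different from $\delta(y_1,y_2)=\|w(y_1)-w(y_2)\|$. Consequently your final inequality is not \eqref{equationFINITA} for $\phi$ with respect to any quotient map. (You also leave parts (1) and (2) of the statement untouched, though those are the routine ones.)

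The good news is that your Pythagorean scheme survives once the target fibre is corrected: measuring against $(\tfrac12\pi)^{-1}(y_2)=\pi^{-1}(2y_2)$, whose transverse coordinate is $2w(y_2)$, the transverse displacement of $\phi$ becomes $2\delta(y_1,y_2)$ while the longitudinal one is still bounded by $\delta(y_1,y_2)\bigl(\sqrt{L_1^2-1}+\sqrt{L_2^2-1}\bigr)$, so $\phi$ is an intrinsically Lipschitz section of $\tfrac12\pi$ with constant $\sqrt{1+\tfrac14\bigl(\sqrt{L_1^2-1}+\sqrt{L_2^2-1}\bigr)^2}$ --- actually sharper than the additive constant one gets otherwise. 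This repaired argument is genuinely different from the paper's route, which never decomposes orthogonally along the fibre direction but instead rescales distances between fibres of $\frac1{\alpha^\lambda}\pi$ (Lemma \ref{lem19}) and compares $d(\pi^{-1}(y),\pi^{-1}(z))$ with $d(f(y),\pi^{-1}(z))$ as in \eqref{equationSobolev10}. Two further points to nail down if you pursue your version: the Pythagorean identity requires the Euclidean norm on $\R^s$ (an arbitrary norm will not do), and the parallelism of the fibre lines should be deduced from linearity (fibres are cosets of $\ker\pi$) rather than from \eqref{equationpinonL} alone, where coefficients such as $\beta^\lambda$ with $\beta<0$ need not even be defined.
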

Here it is fundamental that $\pi$ is a linear map. The aim of this paper is to prove the last result when $\pi$ does not satisfy this property. We focus our attention on that property because it is not true in the context of Carnot groups. More precisely, the main results are 
\begin{enumerate}
\item Proposition \ref{sommadiLipeLip.2senzaa}: we give a Leibniz formula for the intrinsic slope when $\pi$ satisfies a weaker linearity condition.
\item Proposition \ref{carnotgroupsIDEA} and  Corollary \ref{coroll15g}: we focus our attention on the well-known properties of intrinsic dilations in Carnot groups  and we show that the dilation of a Lipschitz section is so too. 
\item Proposition \ref{carnotgroupsIDEA.12}:  \textbf{an open question is whether the sum of two Lipschitz maps in the Franchi, Serapioni and Serra Cassano sense is so too.}  Our idea of proof in  \cite{D22.31may} does not work to solve this open question in general case; on the other hand, if we just consider the linearity of $\pi$ valued in the points of the two sections, then it is possible to get a positive answer  in the context of Carnot groups of step 2 \cite{BLU, DD20}.

\end{enumerate}

\section{A weaker linearity condition for $\pi$} 

In this section, we study the properties proved in  \cite{D22.31may} when $\pi :X \to Y$ is not linear. Regarding the definition of the intrinsic Lipschitz constants the reader can see \cite[Section 3]{D22.31may}. Here, we ask that there exists $\lambda > 0$ such that
\begin{equation}\label{equationpinonL}
\pi (\alpha x_1+\beta x_2)= \alpha ^\lambda \pi(x_1) + \beta^\lambda \pi(x_2), \quad \mbox{ for any } x_1, x_2 \in X,
\end{equation}
and for any  $\alpha , \beta \in \R$ with $(\alpha , \beta) \ne (0,0).$

\begin{prop}\label{prop11giu.0}
Let $\phi , \psi :Y \to X$ be sections of $\pi.$ Then, for any $\alpha , \beta \in \R$ with $(a,b) \ne (0,0)$ the map $\eta :=\alpha \phi +\beta\psi :Y \to X$ is a section of $(\frac 1 {\alpha ^\lambda + \beta ^\lambda }) \pi.$
\end{prop}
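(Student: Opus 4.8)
The plan is to verify the defining identity \eqref{equation1} for $\eta$ directly, treating the whole statement as essentially a one-line consequence of the weak linearity \eqref{equationpinonL} combined with the hypothesis that $\phi$ and $\psi$ are sections. I do not expect to need any metric or topological structure here; only the algebraic identities should enter.

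First I would fix $y\in Y$ and write out $\eta(y)=\alpha\phi(y)+\beta\psi(y)\in X$. Applying $\pi$ and invoking \eqref{equationpinonL} with $x_1=\phi(y)$ and $x_2=\psi(y)$ gives $\pi(\eta(y))=\alpha^\lambda\,\pi(\phi(y))+\beta^\lambda\,\pi(\psi(y))$. Since $\phi$ and $\psi$ are sections of $\pi$, identity \eqref{equation1} yields $\pi(\phi(y))=y$ and $\pi(\psi(y))=y$, so that $\pi(\eta(y))=(\alpha^\lambda+\beta^\lambda)\,y$. Multiplying by the scalar $\tfrac{1}{\alpha^\lambda+\beta^\lambda}$ then recovers $y$, which is exactly the assertion that $\eta$ is a section of $\left(\tfrac{1}{\alpha^\lambda+\beta^\lambda}\right)\pi$. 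As $y$ was arbitrary, this completes the argument.

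The one place that genuinely demands care is the well-definedness of the target map, and this is where I would expect any subtlety to lie rather than in the computation itself. The scalar $\tfrac{1}{\alpha^\lambda+\beta^\lambda}$ must be finite, so one really needs $\alpha^\lambda+\beta^\lambda\neq 0$; I would state this explicitly, since $(\alpha,\beta)\neq(0,0)$ alone does not guarantee it (for instance when $\lambda=1$ and $\beta=-\alpha$). In addition, both \eqref{equationpinonL} and the conclusion presuppose that the codomain $Y$ carries the addition and scalar multiplication used implicitly above, and, when $\lambda$ is not an integer, that $\alpha^\lambda$ and $\beta^\lambda$ have been assigned a fixed meaning for negative arguments; I would flag these conventions at the outset so the chain of equalities is unambiguous. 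Beyond these bookkeeping points there is no real obstacle: the result is a formal computation.
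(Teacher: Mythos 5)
Your argument is correct and is essentially identical to the paper's own proof, which likewise just computes $\pi(\eta(y))=\pi(\alpha\phi(y)+\beta\psi(y))=(\alpha^\lambda+\beta^\lambda)y$ via \eqref{equationpinonL} and the section property. Your additional caveats (that $\alpha^\lambda+\beta^\lambda\neq 0$ is needed and not guaranteed by $(\alpha,\beta)\neq(0,0)$, and that $\alpha^\lambda$ must be given a meaning for negative $\alpha$ and non-integer $\lambda$) are legitimate points that the paper leaves implicit.
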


\begin{proof}
Fix $\alpha , \beta \in \R$ with $(\alpha , \beta) \ne (0,0).$ The thesis follows because $\pi (\eta (y)) =\pi (\alpha \phi (y)+\beta \psi(y)) = (\alpha ^\lambda + \beta ^\lambda) y$ for any $y\in Y.$
\end{proof}

\begin{prop}\label{prop11giu.2}
Let $X$ be a normed space and $\phi :Y \to X$ be an intrinsically $L$-Lipschitz section of $\pi.$ Then, for any $\alpha \in \R - \{ 0 \}$  the map $\alpha \phi  :Y \to X$ is an intrinsically Lipschitz  section of $(\frac 1 {\alpha^\lambda  }) \pi$ with the same Lipschitz constant.
\end{prop}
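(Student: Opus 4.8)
The plan is to check the two conditions in Definition \ref{def_ILS} directly for the map $\alpha\phi$ against the quotient map $q := \tfrac{1}{\alpha^\lambda}\pi$, using the homogeneity of $\pi$ from \eqref{equationpinonL} together with the absolute homogeneity of the norm on $X$. First I would extract the one-variable scaling rule from \eqref{equationpinonL}: choosing $\beta=0$ (permitted since $\alpha\neq 0$, and $0^\lambda=0$ as $\lambda>0$) yields $\pi(\alpha x)=\alpha^\lambda\pi(x)$ for every $x\in X$. The section identity is then immediate, since for each $y\in Y$ one has
\[
q(\alpha\phi(y))=\tfrac{1}{\alpha^\lambda}\pi(\alpha\phi(y))=\tfrac{1}{\alpha^\lambda}\,\alpha^\lambda\,\pi(\phi(y))=\pi(\phi(y))=y;
\]
this is also the $\beta=0$ instance of Proposition \ref{prop11giu.0}.

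The geometric core of the argument is the identification of the fibers of $q$. Since $q(x)=y_2$ is equivalent to $\pi(x)=\alpha^\lambda y_2$, I would prove $q^{-1}(y_2)=\pi^{-1}(\alpha^\lambda y_2)=\alpha\,\pi^{-1}(y_2)$. The inclusion $\alpha\,\pi^{-1}(y_2)\subseteq\pi^{-1}(\alpha^\lambda y_2)$ follows from the scaling rule, because $\pi(\alpha a)=\alpha^\lambda\pi(a)=\alpha^\lambda y_2$ whenever $\pi(a)=y_2$; the reverse inclusion follows by scaling any $z\in\pi^{-1}(\alpha^\lambda y_2)$ back by $\alpha^{-1}$ and using $(\alpha^{-1})^\lambda\alpha^\lambda=1$ to get $\pi(\alpha^{-1}z)=y_2$.

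Finally I would compare the two sides of \eqref{equationFINITA} under the dilation $x\mapsto\alpha x$. Because $X$ is normed, $d(\alpha x,\alpha x')=|\alpha|\,d(x,x')$, so the left-hand side becomes $d(\alpha\phi(y_1),\alpha\phi(y_2))=|\alpha|\,d(\phi(y_1),\phi(y_2))$, while the distance to the fiber on the right, using the identification above, becomes
\[
d\big(\alpha\phi(y_1),\,q^{-1}(y_2)\big)=\inf_{a\in\pi^{-1}(y_2)}|\alpha|\,d(\phi(y_1),a)=|\alpha|\,d\big(\phi(y_1),\pi^{-1}(y_2)\big).
\]
Cancelling the common positive factor $|\alpha|$ reduces the desired inequality for $\alpha\phi$ and $q$ exactly to the $L$-Lipschitz inequality \eqref{equationFINITA} for $\phi$ and $\pi$, so the constant is preserved. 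I expect no real obstacle here; the only delicate point is the well-definedness and algebra of the real powers $\alpha^\lambda$ and $(\alpha^{-1})^\lambda$ for possibly negative $\alpha$, which is precisely where \eqref{equationpinonL} is invoked in each direction of the fiber computation.
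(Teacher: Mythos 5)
Your proposal is correct and follows essentially the same route as the paper: derive the one-variable scaling $\pi(\alpha x)=\alpha^\lambda\pi(x)$, identify the fiber $\bigl(\tfrac{1}{\alpha^\lambda}\pi\bigr)^{-1}(z)$ with $\alpha\,\pi^{-1}(z)$, and use the homogeneity of the norm to cancel $|\alpha|$. If anything, your version is slightly more careful than the paper's, which picks a point of the fiber achieving the distance (implicitly assuming the infimum is attained and using only one inclusion of the fiber identification), whereas you prove both inclusions and work with the infimum directly.
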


\begin{proof}
Fix $\alpha \in \R - \{ 0 \}$ and $y, z \in Y.$  By Proposition \ref{prop11giu.0}, we have that $\alpha \phi$ is a section of $(\frac 1 {\alpha ^\lambda  }) \pi.$ On the other hand, noting that if $x \in  \pi^{-1} (z),$  then
\begin{equation}\label{equation11giu.1}
\alpha x \in  \pi ^{-1} (\alpha ^\lambda z) = \left(\frac 1 {\alpha ^\lambda  } \pi \right)^{-1} (z);
\end{equation}
indeed,
\begin{equation*}
\pi (\alpha x )= \alpha ^\lambda \pi(x) =\alpha ^\lambda z,
\end{equation*}
and so $\frac 1{ \alpha ^\lambda } \pi (\alpha x ) =z.$ Now we choose $x \in  \pi^{-1} (z)$ such that 
\begin{equation*}
d\left(\alpha \phi (y), \left(\frac 1 {\alpha ^\lambda  } \pi \right)^{-1} (z) \right)= d(\alpha \phi (y) , \alpha x).
\end{equation*}
Consequently,
\begin{equation*}
\begin{aligned}
d(\alpha \phi (y), \alpha \phi (z)) & = |\alpha| d(\phi (y), \phi (z)) \leq L |\alpha | d(\phi (y), \pi ^{-1} (z))\\
&  \leq L |\alpha | d(\phi (y), x) = L d\left(\alpha \phi (y), \left(\frac 1 {\alpha ^\lambda  } \pi \right)^{-1} (z) \right).
\end{aligned}
\end{equation*}

\end{proof}

 \begin{theorem}\label{theorem11.3}
 Let  $X=\R^s$, $Y$ be a metric space and $\pi: X \to Y$ be a quotient map such that each fiber is contained in a straight line. If $\pi$ satisfies \eqref{equationpinonL}, then the sum of two intrinsically Lipschitz sections is so too. 
\end{theorem}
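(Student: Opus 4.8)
The plan is to reduce the statement to a purely perpendicular distance computation. Write $\eta:=\phi+\psi$ and let $L_\phi,L_\psi\ge 1$ be intrinsic Lipschitz constants for $\phi,\psi$. By Proposition \ref{prop11giu.0} (applied with $\alpha=\beta=1$), $\eta$ is a section of $\tfrac{1}{2}\pi$, so the target is the inequality \eqref{equationFINITA} of Definition \ref{def_ILS} for $\eta$ relative to $\tfrac12\pi$, namely $d(\eta(y_1),\eta(y_2))\le L\, d\!\left(\eta(y_1),(\tfrac12\pi)^{-1}(y_2)\right)$ for a suitable $L$. The guiding idea is that, because $X=\R^s$ and each fibre sits on a straight line, the distance of a point to a fibre is governed only by a perpendicular displacement, which is insensitive to which point of the fibre a section selects.

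First I would exploit \eqref{equationpinonL} to show that the lines carrying the (non-degenerate) fibres are mutually parallel: if $x,x'\in\pi^{-1}(y)$ then, for any third point $x_2$, the points $x+x_2$ and $x'+x_2$ lie in the common fibre $\pi^{-1}(y+\pi(x_2))$ and differ by $x-x'$, so that fibre has the direction of $x-x'$; letting $x_2$ vary sweeps out a neighbourhood of $y$ in $Y$, yielding a common direction. Fix a unit vector $v$ spanning it, let $P$ be the orthogonal projection onto $v^{\perp}$, and set $q_y:=P(\pi^{-1}(y))\in v^{\perp}$, a single point. Since each fibre is a line parallel to $v$, the distance from any $w\in X$ to $\pi^{-1}(y)$ equals $\norm{P(w)-q_y}$. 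As $\phi(y_1),\psi(y_1)\in\pi^{-1}(y_1)$ both have $P$-image $q_{y_1}$,
\begin{equation*}
d(\phi(y_1),\pi^{-1}(y_2))=d(\psi(y_1),\pi^{-1}(y_2))=\norm{q_{y_1}-q_{y_2}}=:D.
\end{equation*}
Next I would pin down the analogous quantity for $\eta$. The point $\eta(y_1)=\phi(y_1)+\psi(y_1)$ has $P$-image $2q_{y_1}$; and since $\eta$ is a section of $\tfrac12\pi$, one has $\eta(y)\in(\tfrac12\pi)^{-1}(y)=\pi^{-1}(2y)$, whose perpendicular position is $q_{2y}$. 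Comparing $P$-images gives the key identity $q_{2y}=2q_y$ for every $y$, whence
\begin{equation*}
d\!\left(\eta(y_1),(\tfrac12\pi)^{-1}(y_2)\right)=\norm{2q_{y_1}-q_{2y_2}}=2\norm{q_{y_1}-q_{y_2}}=2D.
\end{equation*}

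Finally, the triangle inequality in $\R^s$ together with the intrinsic Lipschitz bounds for $\phi$ and $\psi$ yields
\begin{equation*}
d(\eta(y_1),\eta(y_2))\le d(\phi(y_1),\phi(y_2))+d(\psi(y_1),\psi(y_2))\le (L_\phi+L_\psi)\,D=\tfrac{L_\phi+L_\psi}{2}\,d\!\left(\eta(y_1),(\tfrac12\pi)^{-1}(y_2)\right),
\end{equation*}
so $\eta$ is an intrinsically Lipschitz section of $\tfrac12\pi$ with constant $L=\tfrac{L_\phi+L_\psi}{2}\ge 1$; the diagonal case $\phi=\psi$ is consistent with Proposition \ref{prop11giu.2}. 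I expect the main obstacle to be the second paragraph: justifying rigorously that the fibres lie on parallel lines and that distance to a fibre reduces to the perpendicular displacement $\norm{P(w)-q_y}$. The delicate point is the possibility that a fibre is only a proper sub-segment (or ray) of its line, in which case the nearest point could be an endpoint and an extra component along $v$ would appear; handling this—or showing it cannot occur under \eqref{equationpinonL}—is where the real work lies, the remaining steps being the short computation above.
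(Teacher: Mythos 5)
Your opening step---that $\eta=\phi+\psi$ is a section of $\tfrac12\pi$ via Proposition \ref{prop11giu.0}---is precisely the only explicit content of the paper's own proof of Theorem \ref{theorem11.3}, which otherwise consists of a citation of an external result; your perpendicular--projection computation is a plausible reconstruction of what that external argument must do, and, granting its premises, the constant $(L_\phi+L_\psi)/2$ does come out correctly. However, the difficulty you flag in your last sentence is a genuine gap, not a side issue, because all three of your distance identities depend on it: the equalities $d(\phi(y_1),\pi^{-1}(y_2))=\norm{q_{y_1}-q_{y_2}}$, $d(\phi(y_1),\pi^{-1}(y_2))=d(\psi(y_1),\pi^{-1}(y_2))$, and $d\bigl(\eta(y_1),(\tfrac12\pi)^{-1}(y_2)\bigr)=2\norm{q_{y_1}-q_{y_2}}$ all require the relevant fibers to be \emph{entire} affine lines in the common direction $v$, whereas the hypothesis only says each fiber is \emph{contained} in a line. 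If $\pi^{-1}(y_2)$ were a proper closed subset of its line, the nearest point to $\phi(y_1)$ could be an endpoint, the distance would acquire a component along $v$, and---crucially---$d(\phi(y_1),\pi^{-1}(y_2))$ and $d(\psi(y_1),\pi^{-1}(y_2))$ would in general differ, so the final chain of inequalities would not close.

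The gap can be filled from \eqref{equationpinonL} itself, at least when $\lambda=1$: if $x\neq x'$ lie in $\pi^{-1}(y)$, then $\pi(\alpha x+(1-\alpha)x')=\alpha y+(1-\alpha)y=y$ for every $\alpha\in\R$, so the fiber contains the full line through $x$ and $x'$. For $\lambda\neq 1$ the fiber only visibly contains $\{\alpha x+\beta x':\alpha^\lambda+\beta^\lambda=1\}$, and requiring that set to be collinear is already a strong constraint (indeed, comparing $2x_0$ and $2^{1/\lambda}x_0$, both of which land in $\pi^{-1}(2y)$, forces every non-degenerate fiber to lie on the line $\R v$ through the origin), so this case needs a separate discussion rather than a parenthetical remark. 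Three smaller points also need tightening: your translation argument shows the common direction only for fibers over $y+\pi(X)=y+Y$, so you must check $y+Y=Y$ (which needs $\pi(-x)=-\pi(x)$, i.e.\ a sensible reading of $(-1)^\lambda$); the degenerate case in which every fiber is a singleton should be treated explicitly (take $P=\mathrm{id}$ and the same computation survives); and a mixture of singleton and non-degenerate fibers must be excluded, which your translation argument does once the first point is settled. With these additions your argument is complete and self-contained, which is more than the paper provides in-line.
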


\begin{proof}
The proof is exactly as  in \cite[Theorem 4.1]{D22.31may}, noting that the fiber of the sum is $(1/2\pi) ^{-1}$ thanks to Proposition \ref{prop11giu.0}.
\end{proof}

Now we want to obtain the Leibniz formula for the  intrinsic Lipschitz constants using an additional suitable condition on the fibers as in \cite{D22.31may}.

 \begin{theorem}[Leibniz formula]\label{sommadiLipeLip.2senzaa} 
 Let $Y$ be a metric space and $\pi: X \to Y$ be a quotient map such that  either $X=\R$ or $X=\R^s$ and each fiber is contained in a straight line. Assume also that $\pi$ satisfies \eqref{equationpinonL} and  $\phi $ and $ \psi $ are intrinsically $L$-Lipschitz sections of $\pi$ such that there is $c \geq 1$ satisfying  
 \begin{equation}\label{equationSobolev10}
d(\pi ^{-1} (y),\pi ^{-1} (z) ) \geq \frac 1 c d(f (y), \pi ^{-1} (z)), \quad \, \forall y,z \in Y,
\end{equation}
$\mbox{for } f=\phi , \psi .$  Then, denoting $\eta = \alpha \phi + \beta \psi $ the map $Y\to X$ with $\alpha , \beta \in \R-  \{ 0 \},$  we have that
\begin{equation}\label{equationLeibnitz.7}
Ils (\eta )( y) \leq   \frac c {2^{1/\lambda }}  (  Ils (\phi) ( y)+   Ils (\psi) ( y)),\quad \forall y\in Y.
\end{equation}

\end{theorem}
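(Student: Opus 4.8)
The plan is to estimate $Ils(\eta)(y)$ directly from the definition of the intrinsic slope, which I take (following \cite[Section 3]{D22.31may}) to be $Ils(\phi)(y)=\limsup_{z\to y}\frac{d(\phi(y),\phi(z))}{d(\phi(y),\pi^{-1}(z))}$ for a section $\phi$ of $\pi$, with the fiber of $\pi$ replaced by the fiber of the appropriate quotient map for $\eta$. By Proposition \ref{prop11giu.0}, $\eta$ is a section of $\pi_\eta:=(\alpha^\lambda+\beta^\lambda)^{-1}\pi$, so its fiber over $z$ is $\pi_\eta^{-1}(z)=\pi^{-1}((\alpha^\lambda+\beta^\lambda)z)$. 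Since the hypotheses of Theorem \ref{theorem11.3} are exactly the ones assumed here, $\eta$ is already known to be an intrinsically Lipschitz section, so $Ils(\eta)(y)$ is finite and the inequality is meaningful; the straight-line hypothesis on the fibers enters precisely by putting us in the regime where that theorem applies.

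First I would bound the numerator. As $X$ is a normed space, $d(\eta(y),\eta(z))=\|\alpha(\phi(y)-\phi(z))+\beta(\psi(y)-\psi(z))\|\le |\alpha|\,d(\phi(y),\phi(z))+|\beta|\,d(\psi(y),\psi(z))$ by the triangle inequality and homogeneity of the norm. This step is routine; the delicate point is the denominator.

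The main obstacle is a \emph{lower} bound for $d(\eta(y),\pi_\eta^{-1}(z))$, because a distance-to-a-set is awkward to bound from below and does not split over sums. Here I would exploit the dilation structure forced by \eqref{equationpinonL}: taking one variable to be $0$ gives $\pi(tx)=t^\lambda\pi(x)$ for $t>0$, hence $\pi_\eta^{-1}(z)=\pi^{-1}((\alpha^\lambda+\beta^\lambda)z)=\mu\,\pi^{-1}(z)$ with $\mu:=(\alpha^\lambda+\beta^\lambda)^{1/\lambda}$, and moreover $\pi(\mu^{-1}\eta(y))=\mu^{-\lambda}(\alpha^\lambda+\beta^\lambda)y=y$, i.e.\ $\mu^{-1}\eta(y)\in\pi^{-1}(y)$. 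Using homogeneity of the distance under dilation by $\mu$, and then the elementary fact that a point of $\pi^{-1}(y)$ is at least as far from $\pi^{-1}(z)$ as the two fibers are apart, I obtain $d(\eta(y),\pi_\eta^{-1}(z))=\mu\,d(\mu^{-1}\eta(y),\pi^{-1}(z))\ge \mu\,d(\pi^{-1}(y),\pi^{-1}(z))$.

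Finally I would convert this inter-fiber distance back into the point-to-fiber distances appearing in the individual slopes by invoking \eqref{equationSobolev10} for $f=\phi$ and $f=\psi$, namely $d(\pi^{-1}(y),\pi^{-1}(z))\ge \frac1c\,d(\phi(y),\pi^{-1}(z))$ and likewise for $\psi$. Combining the numerator bound, the denominator bound, and these two inequalities, the quotient defining $Ils(\eta)(y)$ is at most $\frac{c}{\mu}\bigl(|\alpha|\,\tfrac{d(\phi(y),\phi(z))}{d(\phi(y),\pi^{-1}(z))}+|\beta|\,\tfrac{d(\psi(y),\psi(z))}{d(\psi(y),\pi^{-1}(z))}\bigr)$, and passing to the $\limsup$ as $z\to y$ gives $Ils(\eta)(y)\le \frac{c}{\mu}\bigl(|\alpha|\,Ils(\phi)(y)+|\beta|\,Ils(\psi)(y)\bigr)$. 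In the balanced normalization $|\alpha|=|\beta|$ one has $\mu=2^{1/\lambda}|\alpha|$, so $|\alpha|/\mu=|\beta|/\mu=2^{-1/\lambda}$ and \eqref{equationLeibnitz.7} follows; this is exactly where the symmetric constant $2^{1/\lambda}$ originates, and confirming that the coefficients collapse to this value is the one bookkeeping point I would check against the intended normalization of $\alpha,\beta$.
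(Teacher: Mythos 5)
Your core estimate coincides with the paper's: triangle inequality in the numerator, and for the denominator the observation that a suitable rescaling of $\eta(y)$ lies in $\pi^{-1}(y)$ combined with the dilation relation between the fibers of $\pi$ and of the rescaled quotient map --- this is precisely the content of \eqref{equation11giu.1} and Lemma~\ref{lem19} in the paper --- followed by hypothesis \eqref{equationSobolev10}. The paper runs the same chain of inequalities with a fixed $\bar y$ and an $\varepsilon$-argument instead of your $\limsup$, and it too invokes Theorem~\ref{theorem11.3} only to guarantee that $\eta$ is intrinsically Lipschitz; those differences are cosmetic.

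The genuine gap is the final step from your bound $Ils(\eta)(y)\le \frac{c}{\mu}\bigl(|\alpha|\,Ils(\phi)(y)+|\beta|\,Ils(\psi)(y)\bigr)$ with $\mu=(\alpha^\lambda+\beta^\lambda)^{1/\lambda}$ to the stated \eqref{equationLeibnitz.7}. You call this ``bookkeeping'', but for $|\alpha|\ne|\beta|$ your inequality is strictly weaker than the claim: for $\alpha,\beta>0$ one has $|\alpha|/\mu\le 2^{-1/\lambda}$ if and only if $\alpha\le\beta$, so the two coefficients cannot both equal $2^{-1/\lambda}$ unless $|\alpha|=|\beta|$, and as $|\beta|/|\alpha|\to 0$ your coefficient of $Ils(\phi)$ tends to $c$, not $c\,2^{-1/\lambda}$. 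The paper closes this by normalizing \emph{before} estimating: since $Ils(\alpha\phi)=Ils(\phi)$ for every $\alpha\ne 0$ (Proposition~\ref{prop11giu.2}), it replaces $\phi,\psi$ by $\alpha\phi,\beta\psi$ and reduces at the outset to $\eta=\phi+\psi$, where $\mu^\lambda=2$ and the symmetric constant $2^{1/\lambda}$ appears. You need to add this reduction (and check that \eqref{equationSobolev10} survives the rescaling), or else your argument only proves the theorem in the balanced case $|\alpha|=|\beta|$.
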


We need the following lemma which is true in more general setting.
 \begin{lem}\label{lem19}
Let $X$ be a normed space, $Y$ be a topological space and $\pi: X \to Y$ be a quotient map satisfies \eqref{equationpinonL}. Then
\begin{equation}\label{equation6magI}
| \alpha | d(\pi ^{-1} (y_1),\pi ^{-1} (y_2) ) =  d \left(\left(\frac 1 {\alpha ^\lambda  } \pi \right)^{-1} (y_1), \left(\frac 1 {\alpha ^\lambda  } \pi \right)^{-1} (y_2) \right), \quad \forall y_1, y_2 \in Y, \forall \alpha \in \R -\{0\}.
\end{equation}
\end{lem}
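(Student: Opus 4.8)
The plan is to reduce the statement to a single set identity, namely that the fibers of the scaled quotient map $(1/\alpha^\lambda)\pi$ are exactly the $\alpha$-dilates of the fibers of $\pi$, and then to pull the scalar $|\alpha|$ out of the set-distance using the homogeneity of the norm. The only consequence of \eqref{equationpinonL} that I expect to use is the homogeneity of $\pi$: applying \eqref{equationpinonL} with $\beta = 0$ (and $\alpha \neq 0$, so that $(\alpha,\beta)\neq(0,0)$) gives $\pi(\alpha x) = \alpha^\lambda \pi(x)$ for every $x \in X$.

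Next I would identify the fibers of $\frac{1}{\alpha^\lambda}\pi$. As already recorded in \eqref{equation11giu.1}, one has $w \in \left(\frac{1}{\alpha^\lambda}\pi\right)^{-1}(y)$ if and only if $\pi(w) = \alpha^\lambda y$. Using the homogeneity relation in both directions — if $x \in \pi^{-1}(y)$ then $\pi(\alpha x)=\alpha^\lambda y$, so $\alpha x$ lies in this fiber, and conversely if $\pi(w)=\alpha^\lambda y$ then $\pi(w/\alpha)=\alpha^{-\lambda}\alpha^\lambda y = y$, so $w/\alpha \in \pi^{-1}(y)$ — I obtain the set identity
$$\left(\frac{1}{\alpha^\lambda}\pi\right)^{-1}(y) = \alpha\,\pi^{-1}(y), \qquad \forall y \in Y.$$

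Finally I would compute the distance. Writing $A = \pi^{-1}(y_1)$ and $B = \pi^{-1}(y_2)$, the right-hand side of \eqref{equation6magI} equals $d(\alpha A, \alpha B)$ by the fiber identity. In a normed space $\|\alpha a - \alpha b\| = |\alpha|\,\|a-b\|$, and since $|\alpha|>0$ the infimum defining the set-distance commutes with multiplication by $|\alpha|$; hence $d(\alpha A, \alpha B) = |\alpha|\, d(A,B)$, which is precisely \eqref{equation6magI}.

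There is essentially no hard step here; the only point requiring mild care is the two-sided verification of the fiber identity $\left(\frac{1}{\alpha^\lambda}\pi\right)^{-1}(y) = \alpha\,\pi^{-1}(y)$, together with the implicit assumption that $\alpha^\lambda$ and $(1/\alpha)^\lambda = \alpha^{-\lambda}$ are well defined over the range of $\alpha$ in play, so that the homogeneity of $\pi$ can be inverted. Everything else is the elementary fact that dilation by a positive scalar commutes with the infimum defining $d(\cdot,\cdot)$ on subsets of a normed space.
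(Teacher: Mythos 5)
Your proof is correct, and while it rests on the same core observation as the paper's --- that multiplication by $\alpha$ carries the fibers of $\pi$ onto the fibers of $\frac{1}{\alpha^\lambda}\pi$ --- you package it differently, and your packaging is actually tighter. The paper proves the two inequalities $\geq$ separately, in each direction starting from the assumption that the set-distance is \emph{attained} at some pair of points ($d(\pi^{-1}(y_1),\pi^{-1}(y_2))=d(x_1,x_2)$, respectively $d(h_1,h_2)$); since fibers of a quotient map of a normed space need not be compact, that infimum need not be achieved, so the paper's argument as written only covers the case of attainment and would need an $\varepsilon$-approximate-minimizer modification to be complete. You instead establish the exact set identity $\left(\frac{1}{\alpha^\lambda}\pi\right)^{-1}(y)=\alpha\,\pi^{-1}(y)$ (verified in both directions, using $\pi(\alpha x)=\alpha^\lambda\pi(x)$ and its inverse form $\pi(w/\alpha)=\alpha^{-\lambda}\pi(w)$) and then note that $d(\alpha A,\alpha B)=|\alpha|\,d(A,B)$ because the positive scalar $|\alpha|$ commutes with the infimum defining the set-distance. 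This buys you the equality in one stroke with no attainment hypothesis, so your argument is both more elementary and strictly more general than the written proof. Your caveat about $\alpha^\lambda$ and $\alpha^{-\lambda}$ needing to make sense for negative $\alpha$ (and about the additive structure on $Y$ implicit in \eqref{equationpinonL}) is a genuine issue, but it is inherited from the paper's hypotheses rather than introduced by your argument.
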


 \begin{proof}
 Fix $y_1, y_2 \in Y.$ If $d(\pi ^{-1} (y_1),\pi ^{-1} (y_2) ) =d(x_1 , x_2)$ for some $x_1\in \pi ^{-1} (y_1)$ and $x_2\in \pi ^{-1} (y_2),$ then as above (see \eqref{equation11giu.1})   we deduce that
 \begin{equation*}
 \alpha x_i \in  \pi ^{-1} (\alpha ^\lambda y_i) = \left(\frac 1 {\alpha ^\lambda  } \pi \right)^{-1} (y_i),
\end{equation*}
 for $i=1,2$ and, consequently,
 \begin{equation}\label{equation6mag1}
  \begin{aligned}
d(\pi ^{-1} (y_1),\pi ^{-1} (y_2) ) & = \|x_1-x_2\| = \frac 1 {|\alpha  |}   \|\alpha x_1-\alpha x_2 \| \\
&  \geq \frac 1 {|\alpha  |} d\left( \left( 1/ {\alpha ^\lambda  } \pi \right)^{-1} (y_1),  \left( 1/ {\alpha ^\lambda  } \pi \right)^{-1} (y_2)\right). 
\end{aligned} \end{equation} 
 On the other hand, if $d((1/\alpha ^\lambda  \pi )^{-1} (y_1), (1/\alpha ^\lambda   \pi )^{-1} (y_2) ) =d(h_1, h_2)$ for some $h_i\in (1/\alpha ^\lambda   \pi) ^{-1} (y_i)$ for $i=1,2$ then
 \begin{equation*}
\frac 1 \alpha  h_i\in  \pi ^{-1} (y_i);
\end{equation*}
indeed,
 \begin{equation*}
\pi \left(\frac 1 \alpha h_i \right) = \frac 1 {\alpha ^\lambda }\pi (h_i) =y_i, \quad \mbox{ for } i=1,2.
\end{equation*}
Hence,
\begin{equation}\label{equation6mag2}
\begin{aligned} 
d((1/\alpha ^\lambda   \pi )^{-1} (y_1), (1/ \alpha ^\lambda   \pi )^{-1} (y_2) ) & = \|h_1-h_2\|=|\alpha | \left\| \frac 1 \alpha  h_1 - \frac 1 \alpha  h_2\right\|\\
&  \geq |\alpha |  d(\pi ^{-1} (y_1),\pi ^{-1} (y_2) ). 
\end{aligned}  \end{equation}
Hence, putting together \eqref{equation6mag1} and \eqref{equation6mag2}, we get \eqref{equation6magI}.
 \end{proof}

 \begin{proof}[Proof of Proposition $\ref{sommadiLipeLip.2senzaa}$]
Notice that since $Ils (\phi)=Ils (\alpha \phi)$ for any $\alpha \ne 0$ (see Proposition \ref{prop11giu.2}), it is sufficient to prove \eqref{equationLeibnitz.7} for $\eta = \phi + \psi.$ 

We split the proof in two steps. In the first step we prove that $\eta$ is intrinsic Lipschitz and in the second one we show \eqref{equationLeibnitz.7} following a similar computation in \cite[Lemma 3.2]{KM16}. 

(1). The thesis follows from Theorem \ref{theorem11.3} noting that for $s=1$ it is trivial that each fiber is contained in a straight line. 

(2). Fix $\bar y\in Y$ and $r>0.$ Suppose we are given $\varepsilon >0.$ For any $y\in B(\bar y ,r)$ we have
\begin{equation}\label{equationSobolev10.1}
\begin{aligned}
\frac{d(\phi (\bar y), \phi (y))}{  d(\phi (\bar y), \pi ^{-1} (y)) } \leq Ils (\phi)(\bar y) + \varepsilon \qquad{and} \mbox \qquad \frac{d(\psi (\bar y), \psi (y))}{  d(\psi (\bar y), \pi ^{-1} (y)) } \leq Ils (\psi)(\bar y) + \varepsilon .
\end{aligned}
\end{equation}
On the other hand, thanks to point (1), we can find $z\in B(\bar y, r)$ so that
\begin{equation*}
\begin{aligned}
Ils (\phi +\psi )(\bar y) \leq \frac{d(\phi (\bar y)+ \psi (\bar y), \phi (z)+\psi (z))}{  d(\phi (\bar y)+\psi (\bar y), (1/2\pi )^{-1} (z)) } + \varepsilon ,
\end{aligned}
\end{equation*}
and so 
% applying the triangle inequality and Lemma \ref{lem19} for $\lambda =2$ noting that $\phi (\bar y)+\psi (\bar y) \in (1/2\pi )^{-1} (\bar y)$ and so $d((1/2\pi )^{-1} (\bar y) , (1/2\pi )^{-1} (z))\leq  d(\phi (\bar y)+\psi (\bar y) , (1/2\pi )^{-1} (z)),$  it holds
\begin{equation*}
\begin{aligned}
 Ils (\phi +\psi )(\bar y) & \leq \frac{d(\phi (\bar y), \phi (z)) + d( \psi (\bar y), \psi (z))}{  d(\phi (\bar y)+\psi (\bar y), (1/2\pi )^{-1} (z)) } +\varepsilon \\
  & \leq \frac{d(\phi (\bar y), \phi (z)) + d( \psi (\bar y), \psi (z))}{  d((1/2\pi )^{-1} (\bar y), (1/2\pi )^{-1} (z)) } +\varepsilon \\
  & = \frac{d(\phi (\bar y), \phi (z)) + d( \psi (\bar y), \psi (z))}{2^{1/\lambda } d( \pi ^{-1} (\bar y), \pi ^{-1} (z)) }+\varepsilon \\
  & \leq  \frac c {2^{1/\lambda }}   \frac{d(\phi (\bar y), \phi (z)) }{ d( \phi (\bar y), \pi ^{-1} (z)) } +   \frac c {2^{1/\lambda }}   \frac{ d( \psi (\bar y), \psi (z))}{ d( \psi (\bar y), \pi ^{-1} (z)) }+\varepsilon \\
  & \leq   \frac c {2^{1/\lambda }}  ( Ils (\phi)(\bar y) + \varepsilon ) +   \frac c {2^{1/\lambda }}   ( Ils (\psi)(\bar y) + \varepsilon )+\varepsilon
\end{aligned}
\end{equation*}
where in the first equality we used the triangle inequality, and in the second one we used the simple fact $d((1/2\pi )^{-1} (\bar y) , (1/2\pi )^{-1} (z))\leq  d(\phi (\bar y)+\psi (\bar y) , (1/2\pi )^{-1} (z)),$ noting that $\phi (\bar y)+\psi (\bar y) \in (1/2\pi )^{-1} (\bar y)$. In the first equality we used Lemma \ref{lem19} for $\alpha ^\lambda =2$ and in the last two inequalities we used \eqref{equationSobolev10} and \eqref{equationSobolev10.1}.

Hence, by the arbitrariness of $\varepsilon,$ the proof is complete.
\end{proof}
  
    \section{The intrinsic dilations vs quotient map}
    
In this section the general setting is a group $G$ together with a closed subgroup $H$ of $G$ in such a way that the quotient space  $G/H:=\{gH:g\in G\}$ naturally is a topological space for which the  map $\pi:g\mapsto gH$  is continuous, open, and surjective: it is a quotient map.

A section for the map $\pi: G\to G/H$ is just a map $\phi: G/H \to G$ such that $\phi(gH)\in gH$, since we point out the trivial identity $\pi^{-1}(gH)=gH$.  To have the notion of intrinsic Lipschitz section we need the group $G$ to be equipped with a distance which we assume left-invariant. We refer to such a $G$ as a {\em metric group}. The reader can see \cite[Section 6]{DDLD21}.

    Following the Carnot structure, we present a result where the additional hypothesis are  compatibility conditions between the distance on $G$, the quotient map and another suitable maps which corresponds to so-called intrinsic dilations in Carnot groups.
 \begin{prop}\label{carnotgroupsIDEA} 
Let $G=H_1\cdot H_2$ be a metric group and, for any fixed $\lambda >0,$ $\delta _\lambda : G \to G$ such that 
  \begin{enumerate}
  \item $\pi_{H_1} $ is $k$-Lipschitz at $1;$
    \item $d(\delta _\lambda g , \delta _\lambda q) = \lambda d(g,q)$ for any $g, q \in G;$
    \item $\delta _\lambda (\delta _ {1/\lambda } (g)) =g$ for any $g\in G;$
\item $\pi _{H_1}(\delta _\lambda g) =\delta _\lambda \pi _{H_1}(g)$ for any $g\in G.$
\end{enumerate}
Then $\delta _\lambda \circ \phi:G/H_2\to G$ is an intrinsically $L\lambda$-Lipschitz section of $\delta _ {1/\lambda } \circ \pi_{H_1}$, if $\phi :G/H_2\to G$ is so too for $\pi _{H_1}$ with Lipschitz constant $L$.
\end{prop}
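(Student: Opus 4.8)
The plan is to verify directly the two defining conditions of an intrinsically Lipschitz section (Definition~\ref{def_ILS}) for the map $\Phi:=\delta_\lambda\circ\phi$ relative to the new quotient map $\Pi:=\delta_{1/\lambda}\circ\pi_{H_1}$: first the algebraic identity $\Pi\circ\Phi=\mathrm{id}_{G/H_2}$, and then the metric estimate \eqref{equationFINITA}. Throughout I would use that, applying property~(3) both with $\lambda$ and with $1/\lambda$, the map $\delta_\lambda$ is a bijection of $G$ with inverse $\delta_{1/\lambda}$, and that by property~(4) the dilation $\delta_\lambda$ intertwines $\pi_{H_1}$ with the induced dilation on the quotient $G/H_2$, so that $\delta_{1/\lambda}\circ\pi_{H_1}$ is a well-defined map $G\to G/H_2$.

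For the section property I would simply compute
\[
\Pi\circ\Phi=\delta_{1/\lambda}\circ\pi_{H_1}\circ\delta_\lambda\circ\phi=\delta_{1/\lambda}\circ\delta_\lambda\circ\pi_{H_1}\circ\phi=\pi_{H_1}\circ\phi=\mathrm{id}_{G/H_2},
\]
where the second equality is property~(4), the third is property~(3), and the last holds because $\phi$ is a section of $\pi_{H_1}$. Hence $\Phi$ is a section of $\Pi$.

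The heart of the argument is the metric estimate, and the step I expect to require the most care is the identification of the new fibres, namely $\Pi^{-1}(y)=\delta_\lambda\bigl(\pi_{H_1}^{-1}(y)\bigr)$ for every $y\in G/H_2$. This follows from property~(4): if $\pi_{H_1}(g)=y$ then $\pi_{H_1}(\delta_\lambda g)=\delta_\lambda y$, so $\delta_\lambda g\in\pi_{H_1}^{-1}(\delta_\lambda y)=\Pi^{-1}(y)$, and the reverse inclusion uses the bijectivity of $\delta_\lambda$. Granted this identity, property~(2) controls both the numerator and the denominator of the intrinsic Lipschitz quotient, since $\delta_\lambda$ scales the distance from a point to a set by the same factor $\lambda$ as it scales distances between points:
\[
d(\Phi(y_1),\Phi(y_2))=\lambda\,d(\phi(y_1),\phi(y_2)),\qquad d\bigl(\Phi(y_1),\Pi^{-1}(y_2)\bigr)=\lambda\,d\bigl(\phi(y_1),\pi_{H_1}^{-1}(y_2)\bigr).
\]

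Finally I would insert the hypothesis that $\phi$ is intrinsically $L$-Lipschitz for $\pi_{H_1}$, i.e. $d(\phi(y_1),\phi(y_2))\le L\,d(\phi(y_1),\pi_{H_1}^{-1}(y_2))$; multiplying by $\lambda$ and using the two displayed identities gives
\[
d(\Phi(y_1),\Phi(y_2))=\lambda\,d(\phi(y_1),\phi(y_2))\le \lambda L\,d\bigl(\phi(y_1),\pi_{H_1}^{-1}(y_2)\bigr)=L\,d\bigl(\Phi(y_1),\Pi^{-1}(y_2)\bigr),
\]
which is exactly \eqref{equationFINITA} for $\Phi$. This establishes the intrinsic Lipschitz property, and the stated constant $L\lambda$ follows a fortiori (for $\lambda\ge 1$ one may instead bound the denominator below by $d(\phi(y_1),\pi_{H_1}^{-1}(y_2))\le d(\Phi(y_1),\Pi^{-1}(y_2))$). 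Note that property~(1) is not needed for this particular statement; the only genuinely delicate point is the bookkeeping around the induced dilation on $G/H_2$ and the fibre identity above, after which everything reduces to the homogeneity in property~(2).
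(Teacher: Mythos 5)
Your proof is correct and follows essentially the same route as the paper's: the section property via properties (3)--(4), and the metric estimate via the homogeneity in (2). In fact your write-up is more complete than the paper's, which stops at the bound $d(\delta_\lambda\phi(a),\delta_\lambda\phi(b))\le L\lambda\, d(\phi(a),\pi_{H_1}^{-1}(b))$ without identifying the fibres of the new quotient map; your fibre identity $\Pi^{-1}(y)=\delta_\lambda\bigl(\pi_{H_1}^{-1}(y)\bigr)$ shows that the denominator also scales by $\lambda$, yielding the sharper constant $L$ (from which $L\lambda$ follows for $\lambda\ge 1$), and you rightly note that hypothesis (1) is never used.
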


 \begin{proof}
 The fact that $\phi$ is a section of  $\delta _ {1/\lambda }\pi_{H_1},$ follows from $(3)$ and
 \begin{equation*}
\pi _{H_1}(\delta _\lambda \phi (g)) = \delta_\lambda \pi _{H_1}(\phi (g)) = \delta _\lambda g, \quad \forall g\in G.
\end{equation*}
Moreover, by $(4)$ we have that for every $a,b \in G/H_2$
\begin{equation*}
d(\delta _\lambda \phi (a) , \delta _\lambda \phi (b)) =\lambda d(\phi (a), \phi (b)) \leq L \lambda d(\phi (a), \pi _{H_1}^{-1}(b)).
\end{equation*}

\end{proof}

 \begin{coroll}\label{coroll15g}
Let $G=N \rtimes H$ be a Carnot group and we consider $\delta _\lambda : G \to G$ the usual intrinsic dilations $($see, for instance, \cite{SC16}$)$. 
Then $\delta _\lambda \phi$ is an intrinsically $L\lambda$-Lipschitz section of $\delta _ {1/\lambda }\pi_{N}$, if $\phi$ is so too for $\pi _{N}$ with Lipschitz constant $L$.
\end{coroll}
    
 \begin{proof}
The only non trivial fact is $(4)$ in Proposition \ref{carnotgroupsIDEA} but it is true for the homogeneity of the polynomials that appear in  the group operation.

\end{proof}  
 
 \begin{exa}[Heisenberg groups]\label{exaHN}
The \emph{$n$-th Heisenberg group
$\mathbb{H}^n$} is the model case of Carnot groups. It is the set $\mathbb{R}^{2n+1}$ with the group
product $\cdot$ given by
\begin{displaymath}
(x_1,\ldots,x_{2n},t)\cdot
(x_1',\ldots,x_{2n}',t')=\left(x_1+x_1',\ldots,x_{2n}+x_{2n}',t+t'+\tfrac{1}{2}\sum_{i=1}^n
x_i x_{n+i}'-x_i'x_{n+i}\right)
\end{displaymath}
for
$(x_1,\ldots,x_{2n},t),(x_1',\ldots,x_{2n}',t')\in\mathbb{R}^{2n+1}$.
We equip $\mathbb{H}^n$ with the left-invariant metric
\begin{equation}\label{eq:metric}
d(p,q):=\|q^{-1}\cdot p\|,\quad p,q\in \mathbb{H}^n,
\end{equation}
where $\|(x,t)\|:=\max\{|x|,\sqrt{|t|}\}$ and $|\cdot|$ denotes
the usual Euclidean norm on $\mathbb{R}^{2n}$ and on $\R$. Here, for any fixed $\lambda >0,$ the intrinsic dilation $\delta _\lambda : \mathbb{H}^n \to \mathbb{H}^n$ is an automorphism defined as
\begin{equation*}
\delta _\lambda (x_1,\ldots,x_{2n},t) =(\lambda x_1,\ldots, \lambda x_{2n},\lambda ^2 t).
\end{equation*} Hence, it is easy to see that $\delta _\lambda (\delta _ {1/\lambda } (g)) =g$ and $\|\delta _\lambda g\| =\lambda \|g\|$ for any $g\in \HH^n.$
Moreover, if we consider the splitting $\mathbb{H}^n = H \ltimes N$ given by \begin{equation*}\label{eq:coordinates}
H=\{(x_1,\ldots,x_k,0,\ldots,0)\colon
(x_1,\ldots,x_k)\in\mathbb{R}^k\}\end{equation*}and\begin{equation*}\label{eq:coordinates2}
N=\{(0,\ldots,0,x_{k+1},\ldots,x_{2n},t)\colon
(x_{k+1},\ldots,x_{2n},t)\in\mathbb{R}^{2n+1-k}\},
\end{equation*}
then
{\small  \begin{equation*}
\begin{aligned}
\pi_N (\delta _\lambda (p)) & = \pi_N (\lambda x_1,\ldots, \lambda x_{2n},\lambda ^2 t)\\
& = \left(0,\dots , 0,\lambda x_{k+1},\ldots, \lambda x_{2n},\lambda ^2 t - \frac 1 2 \lambda ^2 \sum_{i=1}^k
x_i x_{n+i} \right).
\end{aligned}
\end{equation*} }
On the other hand,
{\small \begin{equation*}
\begin{aligned}
\delta _\lambda (\pi_N  (p) ) & = \delta _\lambda \left(0, \dots , 0, x_{k+1}, \dots , x_{2n}, t - \frac 1 2  \sum_{i=1}^k
x_i x_{n+i}\right)\\ & = \left(0,\dots , 0,\lambda x_{k+1},\ldots, \lambda x_{2n},\lambda ^2 t - \frac 1 2 \lambda ^2 \sum_{i=1}^k
x_i x_{n+i} \right),
\end{aligned}
\end{equation*} }
and so we get $$\pi_N (\delta _\lambda (p))= \delta _\lambda (\pi_N  (p) ). $$  Unfortunately, the non-linearity of $\pi$ can already be seen in this case. 

\end{exa}

 \section{The sum of intrinsic Lipschitz sections is so too in Carnot groups of step 2} 
 In this section we consider Carnot groups of step 2 as follow.
 \begin{defi}\label{def5.1.1} We say that $\G := (\R^{m+n}, \cdot  , \delta_\lambda )$ is a Carnot group of step 2 if 
there are $n$ linearly independent, skew-symmetric $m\times m$ real matrices $\mathcal{B}^{(1)}, \dots , \mathcal{B}^{(n)}$ such that  
 for all $p=(p^1,p^2)$ and $q= (q^1,q^2)\in \R^{m} \times \R^{n} $ and for all $\lambda >0$
\begin{equation}\label{1}
p\cdot q = (p^1+q^1 , p^2+q^2+ \frac{1}{2}  \langle \mathcal{B}p^1,q^1 \rangle )
\end{equation}
where $\langle \mathcal{B}p^1,q^1 \rangle := (\langle \mathcal{B}^{(1)}p^1,q^1 \rangle, \dots , \langle \mathcal{B}^{(n)}p^1,q^1 \rangle)$ 
%\begin{equation*}
%\langle \mathcal{B}x,x' \rangle := (\langle \mathcal{B}^{(1)}x,x' \rangle, \dots , \langle \mathcal{B}^{(n)}x,x' \rangle)
%\end{equation*}
and $\langle \cdot , \cdot \rangle$ is the inner product in $\R^m$ and 
\begin{equation*}
\delta_\lambda p  := (\lambda p^1 , \lambda^2 p^2).
\end{equation*}
%Under these assumptions $\G$ is a Carnot group of step 2 with $\R^m$  the  horizontal layer  and $\R^n$  the vertical layer.
\end{defi}
When $H$ is one dimensional, without loss of generality (see \cite[Chapter 3.4]{BLU}), we consider the following splitting
\begin{equation*}\label{5.2.0}
H:=\{ (x_1,0\dots , 0)\,:\, x_1 \in \R \}, \qquad N:=\{ (0,x_2,\dots , x_{m+n})\,:\, x_i \in \R \}.
\end{equation*}

 Unfortunately, in the context of Carnot groups of step two  $\pi_N$ is non linear (see the model case as in Example \ref{exaHN}). However, with the additional hypothesis which is a sort of a compatibility condition between two Lipschitz sections it is possible to deduce the following proposition in Carnot groups of step 2.
   \begin{prop}\label{carnotgroupsIDEA.12} 
Let $G=N \rtimes H$ be a Carnot group of step 2 with $H$ 1-dimensional and $\phi , \psi : G/H \to G$ be intrinsically Lipschitz sections of $\pi _{G/H}$ such that if we put %\begin{equation}\label{equationcompatibilita}
%\sum_{i=1 }^k  \langle \B (0,x'_2,\dots , x'_{m}) , (x_1,0\dots , 0) \rangle =0, \mbox{ for any $x_1 = (\phi (a)+\psi(b))_1$ and $x'_\ell=(\phi (a) + \psi (b))_\ell$}
%\end{equation}
\begin{equation*}
\begin{aligned}
 &  \mathcal{A}_{1,\ell}:= \langle \mathcal{B}^{(\ell)}p^1,q^1  \rangle  - \langle \mathcal{B}^{(\ell)}(0,p_2 + q_2,\dots , p_m +q_{m}) , (p_1+q_1,0\dots , 0) \rangle ,\\
 &   \mathcal{A}_{2,\ell}:=  \langle \mathcal{B}^{(\ell)}( 0, p_{2},\dots , p_{m}),( 0, q_{2},\dots , q_{m})  \rangle +  \langle \mathcal{B}^{(\ell)}( 0, p_{2},\dots , p_{m}),(p_1, 0)  \rangle \\
 & \quad +  \langle \mathcal{B}^{(\ell)}( 0, q_{2},\dots , q_{m}),( q_1, 0)  \rangle ,
 \end{aligned} 
\end{equation*} $\forall \ell =1,\dots , n$  where $p_i=\phi _i(a)$ and $q_i=\psi _i(b)$ for any $i=1,\dots , m+n,$ we want that
\begin{equation}\label{equationCOMPATIBILITA}
\begin{aligned}
 &  \mathcal{A}_{1,\ell} = \mathcal{A}_{2,\ell}, \quad \forall \ell =1,\dots , n.
 \end{aligned}
\end{equation} 

Then the sum of $\phi$ and $\psi$ is intrinsic Lipschitz.
\end{prop}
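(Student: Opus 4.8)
The plan is to transport the proof of Theorem~\ref{theorem11.3} (equivalently \cite[Theorem~4.1]{D22.31may}), which settles the genuinely linear case, to the present situation, letting the compatibility condition~\eqref{equationCOMPATIBILITA} play the role that linearity of $\pi_N$ played there. Throughout I write $\eta := \phi + \psi$ for the pointwise sum in $\R^{m+n}$ and first record, as in Example~\ref{exaHN} specialized to step $2$ with $H$ one-dimensional, the explicit form of the projection: for $g=(g_1,\dots,g_{m+n})$ one has $\pi_N(g)_i = g_i$ for $2\le i\le m$ and $\pi_N(g)_{m+\ell} = g_{m+\ell} - \tfrac12\langle \mathcal{B}^{(\ell)}\bar g,\hat g\rangle$ for $1\le \ell\le n$, where $\bar g = (0,g_2,\dots,g_m)$ and $\hat g=(g_1,0,\dots,0)$. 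The structural point, visible already in~\eqref{1}, is that the first layer of $\pi_N$ is linear in $g$ while all the nonlinearity is concentrated in the bilinear second-layer corrections $-\tfrac12\langle \mathcal{B}^{(\ell)}\bar g,\hat g\rangle$; this is exactly why Proposition~\ref{prop11giu.0} is unavailable here and a separate argument is forced.

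The core step is to verify that $\eta$ is a section of $\tfrac12\pi_N$ and that the slope quotient controlling $Ils(\eta)$ admits the same splitting as in the linear case. Both reduce to evaluating $\pi_N$ on vector sums $\phi(a)+\psi(b)$ and on the fibers $\pi_N^{-1}(\cdot)$, and, because distances on $G$ are computed through the group law, to expanding the resulting second-layer terms. Carrying out this expansion with the formula above and with~\eqref{1}, each second-layer slot $\ell$ produces a collection of pairings $\langle \mathcal{B}^{(\ell)}\,\cdot\,,\cdot\rangle$ between the first and second layers of $\phi(a)$ and $\psi(b)$; sorting them according to their origin, those coming from the group cross-term $\langle \mathcal{B}^{(\ell)}p^1,q^1\rangle$ together with the projection-correction of the sum assemble into $\mathcal{A}_{1,\ell}$, while those coming from the projection-corrections of the two summands assemble into $\mathcal{A}_{2,\ell}$. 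The hypothesis $\mathcal{A}_{1,\ell}=\mathcal{A}_{2,\ell}$ is then precisely the cancellation that makes the second-layer obstruction disappear, so that $\pi_N$ behaves linearly along the values of $\phi$ and $\psi$ and the fiber of $\eta$ may be identified with $(\tfrac12\pi_N)^{-1}$, exactly as in~\cite[Theorem~4.1]{D22.31may}.

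With this \emph{effective} linearity in hand I would close the argument as in Theorem~\ref{theorem11.3}: the point $\phi(\bar y)+\psi(\bar y)$ lies in $(\tfrac12\pi_N)^{-1}(\bar y)$, the triangle inequality for the left-invariant distance bounds the numerator of the slope quotient by $d(\phi(\bar y),\phi(z))+d(\psi(\bar y),\psi(z))$, and the denominator is estimated from below by the fiber distance $d\big((\tfrac12\pi_N)^{-1}(\bar y),(\tfrac12\pi_N)^{-1}(z)\big)$, which is comparable to $d(\pi_N^{-1}(\bar y),\pi_N^{-1}(z))$. Feeding in the intrinsic Lipschitz bounds for $\phi$ and $\psi$ then yields a finite intrinsic Lipschitz constant for $\eta$, whence $\phi+\psi$ is intrinsically Lipschitz.

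The main obstacle is the middle step. Since the nonlinearity of $\pi_N$ is genuine (Example~\ref{exaHN}), the whole content of the proof lies in the algebraic check that the second-layer cross-terms generated by the step-$2$ product cancel precisely under~\eqref{equationCOMPATIBILITA}; the delicate part is the bookkeeping of which bilinear pairing originates from the group product and which from the projection, together with the systematic use of the skew-symmetry of each $\mathcal{B}^{(\ell)}$ to discard the diagonal contributions. A secondary subtlety, already present in the Euclidean setting of \cite{D22.31may}, is that the numerator must be split for the homogeneous metric rather than for a Euclidean norm, so one should check that the $N$--$H$ decomposition of $\eta(\bar y)$ remains compatible with the distance estimate.
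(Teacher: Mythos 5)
Your proposal follows essentially the same route as the paper: both reduce to the earlier sum theorem of \cite{D22.31may} by computing $\pi_N$ explicitly in the step-2 coordinates and checking that the compatibility condition \eqref{equationCOMPATIBILITA} is precisely the cancellation that makes $\pi_N$ additive along the values of $\phi$ and $\psi$, after which the fiber of the sum is identified with $(\tfrac12\pi)^{-1}$ and the slope estimate closes as in the linear case. The one point to tighten is that the paper's algebraic verification is carried out for the group product, i.e.\ it proves $\pi(p\cdot q)=\pi(p)\cdot\pi(q)$ rather than an identity for the Euclidean pointwise sum you declare at the outset --- this is why the cross-terms $\langle \mathcal{B}^{(\ell)}p^1,q^1\rangle$ and $\langle \mathcal{B}^{(\ell)}(0,p_2,\dots,p_m),(0,q_2,\dots,q_m)\rangle$ appear in $\mathcal{A}_{1,\ell}$ and $\mathcal{A}_{2,\ell}$ --- but since your bookkeeping already accounts for these group cross-terms, the argument goes through as you describe.
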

 
  \begin{proof}
We want to apply Corollary 4.6 in \cite{D22.31may}. The only hypothesis that we need is the linearity of $\pi$ which we don't have in general. However, it is sufficient to show that
\begin{equation*}
\pi ( \phi (y_1) + \psi (y_2)) = \pi(\phi (y_1)) + \pi(\psi (y_2)), \quad \forall y_1, y_2 \in G/H,
\end{equation*} where $\phi $ and $\psi$ satisfy \eqref{equationCOMPATIBILITA}. 
For simplicity, we identify $G/H$ with $N,$ and $\pi_{G/H}$ with $\pi_N$ and we just write $\pi.$ We prove
\begin{equation}\label{equationIMPO12giugno}
\begin{aligned}
\pi(p\cdot q)= \pi (p) \cdot \pi (q), 
\end{aligned}
\end{equation}
for $p=(p^1, p^2)=(p_1,\dots , p_{m+n})$ and $q=(q^1, q^2)=(q_1,\dots , q_{m+n})$ when $ \mathcal{A}_{1,\ell} = \mathcal{A}_{2,\ell}.$  By an easy computation, we get
\begin{equation*}
\begin{aligned}
\pi(p)= \left( 0, p_{2},\dots , p_{m}, [\pi(p)]_{m+1}, \dots , [\pi(p)]_{m+n }  \right)
\end{aligned}
\end{equation*}
where
\begin{equation*}
\begin{aligned}
[\pi(p)]_{m+\ell } & = p_{m+\ell} +\frac 1 2  \langle \mathcal{B}^{(\ell)}(0,p_2,\dots , p_m ) , (p_1,0\dots , 0) \rangle ,
\end{aligned}
\end{equation*}
for $\ell = 1,\dots , n.$ Consequently,
\begin{equation*}
\pi(p)\cdot \pi(q)= (0, p_{2}+q_2,\dots , p_{m}+q_{m}, [\pi(p)\cdot \pi(q)]_{m+1} , \dots , [\pi(p)\cdot \pi(q)]_{m+n } ),
\end{equation*}
where
\begin{equation*}
\begin{aligned}
[\pi(p)\cdot \pi(q)]_{m+\ell } & =  \frac 1 2  \langle \mathcal{B}^{(\ell)}( 0, p_{2},\dots , p_{m}),( 0, q_{2},\dots , q_{m})  \rangle  + [\pi(p)]_{m+\ell } + [\pi(q)]_{m+\ell }.\\
%& \quad  +\frac 1 2  \langle \mathcal{B}^{(\ell)}(0,p_2,\dots , p_m ) , (p_1,0\dots , 0) \rangle  +\frac 1 2  \langle \mathcal{B}^{(\ell)}(0,p_2,\dots , p_m ) , (p_1,0\dots , 0) \rangle ,
\end{aligned}
\end{equation*}
On the other hand,
 \begin{equation*}
\begin{aligned}
\pi(p\cdot q)= \left( 0, p_{2}+q_2,\dots , p_{m}+q_{m}, [\pi(p\cdot q)]_{m+1}, \dots , [\pi(p\cdot q)]_{m+n }  \right),
\end{aligned}
\end{equation*}
where
\begin{equation*}
\begin{aligned}
[\pi(p\cdot q)]_{m+\ell } & = p_{m+\ell} +q_{m+\ell} + \frac 1 2  \langle \mathcal{B}^{(\ell)}p^1,q^1  \rangle \\
& \quad -\frac 1 2  \langle \mathcal{B}^{(\ell)}(0,p_2 + q_2,\dots , p_m +q_{m}) , (p_1+q_1,0\dots , 0) \rangle ,
\end{aligned}
\end{equation*}
for $\ell = 1,\dots , n.$ Hence we get the thesis using the hypothesis $ \mathcal{A}_{1,\ell} = \mathcal{A}_{2,\ell}.$
\end{proof}  
 
  \begin{exa}[Heisenberg groups] Let $G=\HH^n = N \rtimes H$ as in Example \ref{exaHN}. By an easy computation, we get an explicit formula of  \eqref{equationCOMPATIBILITA} (here $\ell =1$), i.e., either 
  \begin{equation*}
\phi_1 \equiv 0
\end{equation*}
or 
  \begin{equation*}
\psi _{n+1} \equiv 0.
\end{equation*}
 % The first condition is not really interesting because we deduce $\phi \equiv id$ but the second one can be an additional condition in order to obtain our thesis.
   \end{exa}
 
 \bibliographystyle{alpha}
\bibliography{DDLD}

\end{document}